\newcommand{\noop}[1]{}
\theoremstyle{plain}
\newtheorem{theorem}[equation]{Theorem}
\newtheorem{lemma}[equation]{Lemma}
\newtheorem{corollary}[equation]{Corollary}
\theoremstyle{definition}
\newtheorem{definition}[equation]{Definition}
\theoremstyle{remark}
\newtheorem{remarks}[equation]{Remarks}
\newtheorem{remark}[equation]{Remark}
\numberwithin{equation}{section}
\newcommand{\eps}{\varepsilon}
\newcommand{\re}{\mathbb{R}}
\newcommand{\R}{\mathbb{R}}
\newcommand{\rn}{{\mathbb{R}^n}}
\newcommand{\ree}{\mathbb{R}^{n+1}}
\newcommand{\om}{\Omega}
\newcommand{\hm}{\omega}
\DeclareMathOperator{\I}{I}
\DeclareMathOperator{\II}{II}
\DeclareMathOperator{\BMO}{BMO}
\def\Yint#1{\mathchoice
    {\YYint\displaystyle\textstyle{#1}}%
    {\YYint\textstyle\scriptstyle{#1}}%
    {\YYint\scriptstyle\scriptscriptstyle{#1}}%
    {\YYint\scriptscriptstyle\scriptscriptstyle{#1}}%
      \!\iint}
\def\YYint#1#2#3{{\setbox0=\hbox{$#1{#2#3}{\iint}$}
    \vcenter{\hbox{$#2#3$}}\kern-.51\wd0}}
\def\longdash{{-}\mkern-3.5mu{-}\mkern-3.5mu{-}} 
\def\tiltlongdash{\rotatebox[origin=c]{15}{$\longdash$}}
\def\fiint{\Yint\tiltlongdash}
\def\div{\mathop{\operatorname{div}}\nolimits}
\providecommand{\abs}[1]{ \left \lvert  #1 \right \rvert }
\providecommand{\no}[1]{  \lVert  #1  \rVert }
\author{Simon Bortz}
\author{Moritz Egert}
\author{Olli Saari}
\address{Department of Mathematics, University of Alabama, Tuscaloosa, AL, 35487, USA}
	\email{sbortz@ua.edu} 
\address{Moritz Egert, Department of Mathematics, TU Darmstadt, Schlossgartenstra\ss e 7, 64289 Darmstadt, Germany}
\email{egert@mathematik.tu-darmstadt.de}
\address{Olli Saari, Mathematical Institute, University of Bonn, Endenicher Allee 60, 53115 Bonn, Germany} 
\address{Departament de Matem\`atiques, 
	Universitat Polit\`ecnica de Catalunya,
	Avinguda Diagonal 647, 08028 Barcelona,
	Catalunya, Spain}
\email{olli.saari@upc.edu}
\subjclass[2010]{42B25, 42B37.}
\begin{document}
\allowdisplaybreaks

\title{Carleson Conditions for Weights: The quantitative small constant case}

\begin{abstract}
We investigate the small constant case of a characterization of $A_\infty$ weights due to Fefferman, Kenig and Pipher. In their work, Fefferman, Kenig and Pipher bound the logarithm of the $A_\infty$ constant by the Carleson norm of a measure built out of the heat extension, up to a multiplicative and additive constant (as well as the converse). We prove, qualitatively, that when one of these quantities is small, then so is the other.
In fact,  we show that these quantities are bounded by a constant times the square root of the other, provided at least one of them is sufficiently small.

We also give an application of our result to the study of elliptic measures associated to elliptic operators with coefficients satisfying the ``Dahlberg--Kenig--Pipher" condition. 
\end{abstract}
\maketitle

\section{Introduction}
\noindent The purpose of this work is to study the small constant version of a characterization of $A_\infty$ weights due to Fefferman, Kenig and Pipher \cite{FKP}. The theory of (Muckenhoupt) $A_\infty$ weights, 
initially introduced in the study of maximal functions and singular integral operators, has played a fundamental role in the $L^p$ solvability of elliptic equations. In the paper \cite{FKP}, the authors were investigating the stability of the $A_\infty$ property of elliptic measure under certain perturbations quantified by a Carleson condition. In order to produce a counterexample (to show their main theorem was sharp) they introduced the Carleson characterization of $A_\infty$ weights investigated here. 

Some time later, 
Korey \cite{Koreycarl} studied a qualitative, asymptotic version of this $A_\infty$ characterization.
We also mention the related works on ideal weights \cite{Kor2}, functions of vanishing mean oscillation \cite{Sarason} and continuity of weighted estimates \cite{PV}.
It is however not clear from the arguments neither in \cite{FKP} nor in \cite{Koreycarl},
whether the qualitative, asymptotic  estimates in \cite{Koreycarl} can be made effective or quantified.
Our main result shows that this is the case, 
and we can even assert quantitative (square root) control at the asymptotic limit.
This is the content of the following theorem and its converse, Theorem \ref{converseweight.thm}.
For the Carleson norm $\|\mu_w\|_{\mathcal{C}}$ and the $A_\infty$ characteristic $[w]_{A_\infty}$ see Definitions~\ref{carlesondef.def} and \ref{ainfty.def} below. 
\begin{theorem}\label{mainthrm.thrm}
Let $w$ be a doubling weight on $\mathbb{R}^n$. 
Set $\phi(x) := \pi^{-n/2}e^{-|x|^2}$ and define the measure $\mu_w$ on $(0,\infty) \times \mathbb{R}^n$ by 
\[d\mu_w(r,x) := |\nabla_x \log (\phi_r \ast w)(x)|^2 r \, dx \, dr,\]
where $\phi_r(x) := r^{-n} \phi(x/r)$. 
There exist $\varepsilon > 0$ and $C_1, C_2 > 0$ depending on $n$ and the doubling constant of $w$ such that 
if the Carleson norm of $\mu_w$ satisfies $\|\mu_w\|_{\mathcal{C}} \le \varepsilon$, 
then it holds
\[\log[w]_{A_\infty} \le C_1 \sqrt{\|\mu_w\|_{\mathcal{C}}}, \quad    \no{\log w}_{BMO}   \le C_2\sqrt[4]{\|\mu_w\|_{\mathcal{C}}}.\]
\end{theorem}
\begin{remarks}\label{mtmeasures.rmk}
(1) We suspect the bound $\no{\log w}_{BMO}  \lesssim \sqrt{\|\mu_w\|_{\mathcal{C}}} $ would be the sharp asymptotic power dependence in the second inequality.
We discuss this briefly in Section \ref{closermks.sect}. \\
(2) Theorem \ref{mainthrm.thrm} also holds for doubling measures. 
If $\nu$ is a doubling {\it measure}, 
then one can define $d\mu_\nu(x,r)$ in the same manner and if $\|\mu_\nu\|_{\mathcal{C}} < \infty$, then $\nu$ is absolutely continuous with respect to Lebesgue measure and $d\mu_\nu = d\mu_w$ for $w := \tfrac{d\nu}{dx}$, see \cite[Lemma~2.19]{BTZ2}.
\end{remarks} 

The proof of Theorem \ref{mainthrm.thrm} is contained in Theorem \ref{localoscbycarleson.thm}. 
The main idea is to use a heat flow identity as in \cite{FKP}
to interpret the weight of the Carleson measure as a right hand side of a heat equation.
To make the estimates asymptotically quantitative and effective,
we use the underlying heat equation more efficiently than what was done in \cite{Koreycarl}.
Our proof is short and very easy to read in comparison with the more geometric arguments in \cite{Koreycarl}.

As mentioned above, we also prove a converse of Theorem \ref{mainthrm.thrm} (up to powers), see  Theorem~\ref{converseweight.thm}. The proof there is a little more geometric, but {\it very} different from \cite{Koreycarl}. In particular, two ingredients are central to the proof of Theorem~\ref{converseweight.thm}. First, we use sharp estimates on $A_p$ and reverse-H\"older constants for weights whose logarithm has small BMO norm (Lemma~\ref{korthrm9.lem}). Second, we exploit how such weights act when integrated on two disjoint subsets of a ball with half measure (Corollary~\ref{korstrongdoub.lem}) in order to get cancellation from the fact that $\nabla_x \phi$ is odd.

Our second motivation for reinvestigating the work of Fefferman, Kenig and Pipher 
comes from some recent developments in the study of elliptic equations with variable coefficients 
which satisfy an oscillation condition, 
sometimes known as the Dahlberg--Kenig--Pipher (DKP) condition. 
This condition was first introduced by Dahlberg 
and then studied by Kenig and Pipher \cite{KP} (see also \cite{DPP, HL}). 
It was shown very recently by David, Li and Mayboroda \cite{DLM} 
that the Green functions for such operators in the upper half-space have gradients 
that are quantitatively small in tangential directions. 
The first author, Toro and Zhao \cite{BTZ2} showed 
that these estimates can be used to control the Carleson norm of $\mu_w$ in Theorem \ref{mainthrm.thrm}, 
when $w$ is the elliptic measure. 
Combining Theorem~\ref{mainthrm.thrm} and the results in \cite{BTZ2}, 
we prove that if the constant in the (weak) DKP condition is small, then the elliptic measure has close to optimal $A_\infty$ constant (Theorem~\ref{dkpsmallthrm.thrm}). 
This quantitative control is what is needed in order to push the small/vanishing constant theory to rougher settings\footnote{This was indeed done in \cite{DLM3} after the first version of the present article was posted.} (Lipschitz graphs and chord arc domains). 
Further details are provided at the end of Section \ref{DKPsect}.  

We point out that a dyadic version of Theorem \ref{mainthrm.thrm} 
can be deduced from the work of Fefferman, Kenig and Pipher \cite[Theorem 3.22]{FKP} 
(see also\footnote{There are differences in the proofs in \cite{bucksum} and \cite{FKP}, and \cite{bucksum} gives a comprehensive study of dyadic $A_\infty$ and summation conditions similar to those in \cite{FKP}.} \cite[Theorem 2.2 (iii)]{bucksum}). 
While the dyadic arguments include the (implicit or explicit) use of martingales,
a special role in the computations for the continuous setting is played by the heat kernel.
Interestingly, the Fefferman--Stein change of kernel argument shows 
that the heat kernel itself is not important in the Carleson condition. 
While the present work provides a very concrete and quantitative approach 
for a Carleson condition based on the heat kernel,
we still need the change of kernel argument to deal with more general kernels that are needed for the applications,
such as Theorem \ref{dkpsmallthrm.thrm}. 
It is unclear to us if there is a way to treat the setting with dyadic martingales 
and the continuous setting with general kernels in a unified way
without resorting to the change of kernel argument.

\subsection*{Acknowledgment}
This work was supported by the Deutsche Forschungsgemeinschaft 
(DFG, German Research Foundation) under Germany's Excellence Strategy -- EXC-2047/1 -- 390685813 and CRC 1060, by the ANR project RAGE ANR-18-CE40-0012 and by `Verein von Freunden der Technischen Universit\"at zu Darmstadt e.V.'. The first author would like to thank Tatiana Toro and Zihui Zhao for some helpful comments about the paper.

\section{Notation}
\noindent 
We fix a dimension $n \in \mathbb{N}$ throughout. 
In what follows, $w$ will always denote a weight, 
that is, a non-negative locally integrable function. 
As is customary in the study of weights, 
we will abuse notation and write $w$ also for the measure $w \, dx$ 
so that for a Borel set $E$ we have $w(E) = \int_E w(x) \, dx$. 
We denote the Lebesgue measure of $E$ by $|E|$.
Given a locally integrable function $f$,
we use the notation
\[
f_E = \fint_{E} f(x) \,dx := \frac{1}{|E|}\int_{E} f(x) \,dx
\]
for the mean value of $w$ over the set $E$.
Throughout, we set $\Delta(x,r)$ to be the open Euclidean ball in $\rn$ with center $x \in \rn$ and radius $r > 0$. 
If $\Delta(x,r) \subset \rn$ is a ball,
we associate with it the Carleson box 
\[
T(x,r) := T(\Delta(x,r)) :=  (0,r) \times \Delta(x,r)
\] 
in $\ree_+ = (0,\infty) \times \R^n$. For any function $f: \rn \to \re$ and any number $r > 0$,
we denote
\[f_r(x) := r^{-n} f(x/r).\]
We reserve the symbol $\phi$ for the heat kernel
\[\phi(x) := \pi^{-n/2}e^{-|x|^2}.\]

\begin{definition}[Doubling]
\label{doubling.def}
Given a Borel measure $\nu$ on $\rn$, 
we say $\nu$ is doubling if $\nu$ is non-trivial and there exists a constant $C_{\nu}$ such that 
\[\nu(\Delta(x,2r)) \le C_{\nu} \nu(\Delta(x,r)), \quad \forall x \in \rn, r > 0.\]
We call the least $C_{\nu}$ the doubling constant for $\nu$. 
\end{definition}

\begin{definition}[$A_{\infty}$ weights]
\label{ainfty.def}
Given a weight function $w$,
we define the $A_{\infty}$ constant as 
\[
[w]_{A_{\infty}} := \sup_{\Delta} w_\Delta \exp\left( -\fint_{\Delta} \log w(x)\, dx \right),
\]
where the supremum is taken over all Euclidean balls $\Delta$.
\end{definition}

We use the original definition of Hru\v{s}\v{c}ev of the $A_{\infty}$ constant,
but we remark that the choice of the definition is not unique \cite{MR3473651}
and that the different definitions of $A_{\infty}$ constants lead to very different values in general \cite{MR3092729}.
Although for instance \cite{parissisrela} prefers to study the so-called Fujii--Wilson constant
for asymptotically sharp reverse H\"older inequalities,
the constant from Definition \ref{ainfty.def} seems to arise very naturally in our considerations.
It remains, however, an interesting question 
if results similar to Theorem \ref{mainthrm.thrm} and Theorem \ref{converseweight.thm} hold for other $A_{\infty}$ constants.

\begin{definition}
\label{bmo.def}
Given a locally integrable function $f$,
we define its $\BMO$ norm as 
\[
\no{f}_{\BMO} :=  \sup_{\Delta} \fint_{\Delta} |f(y) - f_{\Delta} | \, dy,
\]
where the supremum is taken over all Euclidean balls $\Delta$.
\end{definition}

\begin{remark}
If $w$ is a weight and $\log w$ is in $\BMO$,
then it follows from the John--Nirenberg inequality that
$[w]_{A_{\infty}} \le 1 + C \no{\log w}_{\BMO}$ with $C$ only depending on the dimension, provided $\no{\log w}_{\BMO}$ is sufficiently small (depending on dimension).
Therefore we could drop the assumption that $w \in A_\infty$ in Subsection~\ref{smconstainfyerrs.sect}, but we prefer to keep it for the sake of clarity.
\end{remark}

\begin{definition}[Carleson measure]
\label{carlesondef.def}
Given a Borel measure $\nu$ on $\ree_+$,
we define its Carleson norm as 
\begin{equation} 
\|\nu\|_{\mathcal{C}}  := \sup_{\Delta} \frac{\nu(T(\Delta))}{|\Delta|},
\end{equation} 
where the supremum is taken over all Euclidean balls $\Delta$.
\end{definition}

\section{Quantitative estimates on weights}
\label{weights.sec}
\noindent
Let $w$ be a doubling weight in the sense of Definition \ref{doubling.def}.
Denote by $u_w(r,x) := \phi_{\sqrt{r}}*w(x)$ the heat extension of the weight $w$, so that $\partial_r u_w - \frac{1}{4} \Delta_x u_w = 0$. Define the measure $\mu_w$ on $\ree_+$ by
\[d\mu_w(r,x) := |\nabla_x \log u_w(r^{2},x)|^2 r \, dx dr.\]
In the following, we abbreviate $u = u_w$, $\mu = \mu_w$, $\nabla = \nabla_x$, $\Delta = \Delta_x$. Also, we use the above relations between the symbols $w$, $u$ and $\mu$ without further mention in this section. Thanks to doubling of $w$, a dyadic decomposition of the convolution integral yields comparability of $u(r^2,x)$ and $w(\Delta(x,r))$:
\begin{align}
\label{u_vs_measure.eq}
0<\frac{1}{\pi^{n/2} e} \leq \frac{u(r^2,x)}{w(\Delta(x,r))} \leq \sum_{j=1}^\infty \frac{C_w^j} {\pi^{n/2} e^{4^{j-1}}} < \infty.
\end{align}
The characterization of the $A_{\infty}$ class by Fefferman--Kenig--Pipher \cite{FKP} 
is explained by the heat flow identity 
\begin{equation}
\label{FKPcomputation.eq}
\partial_s \log u(s,y) - \frac{1}{4} \Delta \log u(s,y) = \frac{1}{4} \abs{ \nabla \log u(s,y) }^{2},
\end{equation} 
where the right hand side corresponds to the relevant Carleson measure.
We need precise estimates for contributions arising from the three terms in \eqref{FKPcomputation.eq}.
These are proved in the following two subsections, 
first in terms of small Carleson constants and then in terms of small $A_{\infty}$ constants.

\subsection{Case of small Carleson norm}
We start with a pointwise estimate that is needed in the proof.
For any $x \in \rn$ and $r > 0$,
denote $W(x,r) := (r/2,r) \times \Delta(x,r)$. 

\begin{lemma}
\label{moserconsequence.lem} 
Let $w$ be a doubling weight.
There exists a constant $C$ depending only on dimension
and the doubling constant such that
for all $(r,x) \in \ree_+$,
\[ \frac{r^{2}|\Delta u(r^{2},x)|}{u(r^{2},x)} +  \frac{r |\nabla u(r^{2},x)|}{u(r^{2},x)}  \le 
C \sqrt{ \frac{1}{|\Delta(x,r)|} \iint_{T(x,r)} \frac{|\nabla u(t^{2},y)|^{2}}{u(t^{2},y)^{2}} t \, dy dt }.\]
\end{lemma}
\begin{proof}
    Fix $x \in \rn$ and $r > 0$.
    Let $k \in \{1,2\}$.
    Denote $W := (r/2,r) \times \Delta(x,r)$ and $\widetilde{W} :=(3r/4,r) \times \Delta(x,r/2)$.
    By the local estimate in \cite[Section~2.3c]{Evans} we have
\begin{align*}
\frac{1}{r^{2(k-1)+1}|W|} \iint_{W} |\nabla u(t^{2},y)|^{2} t \, dydt 
&= \frac{1}{r^{2k} |\Delta(x,r)| } \iint_{(r^{2}/4,r^{2}) \times \Delta(x,r)} |\nabla u(t,y)|^{2} \, dydt \\
&\ge C \sup_{(t,y) \in \widetilde{W}} |\nabla^{k} u(t^{2},y)|^{2}.
\end{align*}
Using the parabolic Harnack inequality~\cite[Section 7.1b]{Evans} and then \eqref{u_vs_measure.eq} together with the doubling property of the weight, we obtain
\[
\sup_{(t,y) \in W} u(t^{2},y) \le C \inf_{y \in \Delta(x,r)} u(2r^{2},y) \le  C \inf_{(t,y) \in \widetilde{W}} u(t^{2},y).
\]
Together, these two imply 
\[
\frac{1}{|\Delta(x,r)|} \iint_{W} \frac{|\nabla u(t^{2},y)|^{2}}{u(t^{2},y)^{2}} t \, dy dt
    \ge C r^{2k} \sup_{(t,y) \in \widetilde{W}} \frac{|\nabla^{k} u(t^{2},y)|^{2}}{u(t^{2},y)^{2}}
\]
and in particular, by smoothness of $u$ and since $(r,x)$ is on the boundary of $\widetilde{W}$,
\[
r^{k} \frac{|\nabla^{k}u(r^{2},x)|}{u(r^{2},x)} = \sqrt{\frac{r^{2k}|\nabla u(r^{2},x)|^{2}}{u(r^{2},x)^{2}}}
    \le C \sqrt{ \frac{1}{|\Delta(x,r)|} \iint_{W} \frac{|\nabla u(t^{2},y)|^{2}}{u(t^{2},y)^{2}} t \, dy dt },
\]
which is the claimed inequality.
\end{proof}

\begin{theorem}
\label{localoscbycarleson.thm}
Let $\Delta(x,r) \subset \rn$ be a ball
and $w$ a doubling weight.
There exist constants $C \ge 1$ and $\varepsilon > 0$ only depending on the dimension and the doubling constant such that if 
$\sqrt{\no{ 1_{T(x,2r)} \mu_w }_{\mathcal{C}}} \le \varepsilon$,
then
\begin{align}
\label{eq:localoscbycarleson-1}
\log \fint_{\Delta(x,r)} w(y) \, dy - \fint_{\Delta(x,r)} \log w(y) \, dy  &\le C  \sqrt{\no{ 1_{T(x,2r)} \mu_w }_{\mathcal{C}}}, \\
\label{eq:localoscbycarleson-2}
\fint_{\Delta(x,r)} |\log(y) - (\log)_{\Delta(x,r)}| \, dy &\le C \sqrt[4]{\no{ 1_{T(x,2r)} \mu_w }_{\mathcal{C}}}.
\end{align}
In particular, if $\sqrt{\no{\mu_w}_{\mathcal{C}}} \le \varepsilon$, 
then 
\[
\log [w]_{A_{\infty}} \le C \sqrt{\no{\mu_w}_{\mathcal{C}}}, \quad \no{\log w}_{BMO} \le C \sqrt[4]{\no{\mu_w}_{\mathcal{C}}}.
\]
\end{theorem}
\begin{proof}
Without loss of generality, 
we assume $x=0$ and $r=1$.
The claim then follows by scaling and translation.
We start by proving \eqref{eq:localoscbycarleson-1}.
We denote $\Delta(0,1)= \Delta$ for brevity.
We start by writing 
\[
0 \le \log w_{\Delta} - (\log w)_{\Delta}
    = (\log w_{\Delta} -  (\log u(1,\cdot ))_{\Delta} ) + ((\log u(1,\cdot ))_{\Delta} - (\log w)_{\Delta})
    = \I + \II .
\]
By the fundamental theorem of calculus and
the heat flow identity, 
\[
\II = \fint_{\Delta} \int_{0}^{1} \partial_t \log u(t,y) \, dtdy
    = \frac{1}{4|\Delta|} \iint_{T(\Delta)} \left( \Delta \log u(t,y)  +  |\nabla \log u (t,y)|^{2} \right) \, dtdy .
\]
By definition,
\begin{multline*}
\frac{1}{4|\Delta|}\iint_{T(\Delta)} |\nabla \log u (t,y)|^{2} \, dtdy =\frac{1}{8|\Delta|}\iint_{T(\Delta)}  |\nabla \log u (t^{2},y)|^{2} \, t dtdy \\
 \le \frac{1}{8} \no{ 1_{T(0,2)} \mu_w }_{\mathcal{C}} .
\end{multline*}
By the divergence theorem and Lemma \ref{moserconsequence.lem},
\[
\frac{1}{4|\Delta|} \iint_{T(\Delta)}   \Delta \log u(t,y) \le \frac{1}{4|\Delta|} \int_{\partial \Delta} \int_{0}^{1} |\nabla \log u(t,y)| \le C \sqrt{\no{ 1_{T(0,2)} \mu_w }_{\mathcal{C}}}.
\]
This concludes the estimation of the term $\II$.

For the other term $\I$,
we write 
\begin{equation}
\label{eq:I-small-ainfty}
\I = (\log w_{\Delta} -  \sup_{y \in \Delta} \log u(1,y)    )  +  (   \sup_{y \in \Delta} \log u(1,y) -  (\log u(1,\cdot ))_{\Delta} ) .
\end{equation}
By the fundamental theorem of calculus and Lemma \ref{moserconsequence.lem}, 
\[
 \sup_{y \in \Delta} \log u(1,y) -  (\log u(1,\cdot ))_{\Delta}  \le 2 \sup_{y \in \Delta} |\nabla \log u(1,y)| \le C \sqrt{\no{ 1_{T(0,2)} \mu_w }_{\mathcal{C}}}.
\]
To bound the other term in \eqref{eq:I-small-ainfty},
we compute 
\[
(\log w_{\Delta} -  \sup_{y \in \Delta} \log u(1,y)    )
\le  \log_{+}  \left( \frac{w_{\Delta}- \sup_{y \in \Delta} u(1,y)}{\sup_{y \in \Delta} u(1,y)} + 1  \right),
\]
where $\log_{+} s = \log  \max\{s,1\} $.
Estimating the supremum in the numerator by the average,
applying the fundamental theorem of calculus, 
using that $u$ is a solution to the heat equation
and using the divergence theorem,
we obtain
\[
\frac{w_{\Delta}- \sup_{y \in \Delta} u(1,y)}{\sup_{y \in \Delta} u(1,y)} 
\le \frac{1}{\sup_{y \in \Delta} u(1,y)}  \frac{1}{|\Delta|} \int_{ \partial \Delta} \int _{0}^{1} |\nabla u(t,y) | \, dtdy .
\]
This is bounded by 
\begin{multline}
\label{eq:sum-in-correction}
\sum_{k=-\infty}^{0} \sup_{2^{k-1} \le s \le 2^{k} } \sup_{y \in \partial \Delta} \frac{u(s,y)}{ u(1,y)}  \frac{1}{|\Delta|} \int_{ \partial \Delta} \int _{2^{k-1}}^{2^{k}} \frac{|\nabla u(t,y)|}{u(t,y)} \, dtdy  \\
 \le C \sqrt{\no{ 1_{T(0,2)} \mu_w }_{\mathcal{C}}} \sum_{k=-\infty}^{0} 2^{k/2} \sup_{2^{k-1} \le s \le 2^{k} } \sup_{y \in \partial \Delta} \frac{u(s,y)}{ u(1,y)}, 
\end{multline}
where the last inequality is due to Lemma \ref{moserconsequence.lem}.
Here,
\[
\frac{u(s,y)}{ u(1,y)}
= \exp \left( \log u(s,y)  -  \log u(1,y)  \right)
= \exp \left( - \int_{s}^{1} \partial_t \log u (t,y) \, dt \right) 
\]
and by the heat equation and Lemma \ref{moserconsequence.lem},
\[
- \int_{s}^{1} \partial_t \log u (t,y) \, dt
= - \frac{1}{4} \int_{s}^{1}  \frac{\Delta u (t,y)}{u(t,y)}  \, dt  
\le C \sqrt{\no{ 1_{T(0,2)} \mu_w }_{\mathcal{C}}} \log \frac{1}{s} .
\]
Exponentiating,
we conclude 
\[
\frac{u(s,y)}{ u(1,y)} \le s^{-C \sqrt{\no{ 1_{T(0,2)} \mu_w }_{\mathcal{C}}} } ,
\]
and provided $C \sqrt{\no{ 1_{T(0,2)} \mu_w }_{\mathcal{C}}} < 1 / 4$,
the sum in \eqref{eq:sum-in-correction} converges to an absolute constant and the proof of \eqref{eq:localoscbycarleson-1} is complete.
The second inequality \eqref{eq:localoscbycarleson-2} follows from \eqref{eq:localoscbycarleson-1} by Theorem 6 in Section 3.5 of \cite{Kor2}. 
\end{proof}

\begin{remark}
\label{rmk:bad-carleson}
One may argue similarly to the Theorem \ref{localoscbycarleson.thm} to obtain for all balls $\Delta(x,r) \subset \rn$
\begin{multline*}
\fint_{\Delta(x,r)}  | \log w (y) - (\log w)_{\Delta(x,r)}| \, dy\\
\le C\left( \sqrt{\no{ 1_{T(x,2r)} \mu_w }_{\mathcal{C}}} + \sup_{\Delta' \subset \Delta(x,2r)}  \frac{1}{|\Delta'|} \iint_{T(\Delta')} \frac{|\nabla u(t^{2},y)|}{u(t^{2},y)} dtdy \right).
\end{multline*} 
Such an estimate does not imply the conclusion of Theorem \ref{localoscbycarleson.thm}
but it could be used for instance in the setting of H\"older continuous coefficient matrix in our application (Theorem \ref{dkpsmallthrm.thrm}), see the remarks in Section \ref{closermks.sect}.
\end{remark}


\subsection{Case of small weight constant}\label{smconstainfyerrs.sect}

Next we prove an analog of Lemma \ref{moserconsequence.lem},
but now we seek a right hand side with $A_{\infty}$ constant instead of the Carleson norm.
We start with an auxiliary result on $A_{p}$ weights. 
Given $p > 1$,
we denote 
\[
[w]_{A_{p}} := \sup_{\Delta} w_{\Delta} \left( \fint_{\Delta} w(y)^{1/(1-p)} \, dy \right)^{p-1}, \quad  [w]_{B_{p}} := \sup_{\Delta} \frac{1}{w_\Delta} \left( \fint_\Delta w(y)^{p} \, dy \right)^{1/p}.
\]  
Note that by Jensen's inequality $1 \leq [w]_{A_{\infty}} \le [w]_{A_{p}}$. The following has been proved in \cite[Theorem 9]{Kor2}. We state the result in terms of $\no{\log w}_{\BMO}$ instead of $[w]_{A_{\infty}}$, which is an intermediate step in the proof of \cite[Theorem 9]{Kor2}. In fact, the author is only using the smallness of the BMO norm provided by the (small) $A_{\infty}$ constant.

\begin{lemma}\label{korthrm9.lem}
There exist constants $\varepsilon_0 > 0$ and $K > 0$ depending only on dimension 
such that the following holds.
Let $w \in A_\infty$ with 
$\no{\log w}_{\BMO} \leq \varepsilon$
for some $0< \varepsilon < \varepsilon_0$. 
Then for $p = 1 + K  \varepsilon $ it holds 
\[
\max\{[w]_{A_p}, [w]_{B_{1/(p-1)}}\} \le p .
\]
In particular, since $[w]_{A_\infty} \le [w]_{A_p}$, it follows that $[w]_{A_\infty} \le 1 + K\varepsilon$.
\end{lemma}

Lemma \ref{korthrm9.lem} is used in conjunction with the following two corollaries.
It helps to give precise estimates for ratios of weighted measures of sets.

\begin{corollary}\label{alopquotients.cor}
There exist constants $\varepsilon_0 > 0$ and $K > 0$ depending only on dimension 
such that if $w \in A_\infty$ with $ \no{\log w}_{\BMO}  \leq \varepsilon$ for some $0< \varepsilon < \varepsilon_0$,  
then for any ball $\Delta$ and any measurable subset $E \subseteq \Delta$  
\begin{equation}\label{aloptfromBq.eq}
\frac{w(E)}{w(\Delta)} \le (1 + K \varepsilon) \left(\frac{|E|}{|\Delta|} \right)^{1 -K \varepsilon}
\end{equation}
and   
\begin{equation}\label{aloptfromAp.eq}
\left( \frac{|E|}{|\Delta|} \right)^{1+K \varepsilon} \le (1 + K \varepsilon)  \frac{w(E)}{w(\Delta)}  .
\end{equation}
\end{corollary}
\begin{proof}
The claim follows by H\"older's inequality and the definitions.
More precisely,
inequality \eqref{aloptfromBq.eq} takes advantage of the $B_{1/(p-1)}$ condition 
whereas inequality \eqref{aloptfromAp.eq} uses the $A_{p}$ condition.
The (standard) computation is left for the reader.
\end{proof}

\begin{corollary}
\label{korstrongdoub.lem}
There exists $\varepsilon_0 > 0$ and a constant $C$ only depending on the dimension such that the following holds. 
If $w \in A_\infty$ with $ \no{\log w}_{\BMO}  \leq \varepsilon$ for some $0< \varepsilon < \varepsilon_0$,
then for all balls $\Delta$ and all measurable subsets $E,F \subset \Delta$ with
\[
E \cup F = \Delta, \quad |E\cap F| = 0 , \quad  |E| = |F|
\]
it holds
\[\frac{w(E)}{w(F)} \le 1 + C\varepsilon.\]
\end{corollary}
\begin{proof}
Let $\varepsilon_0$ and $K$ be as in Corollary \ref{alopquotients.cor}.
Making $\varepsilon_0$ smaller if needed,
we can assume $K \varepsilon_0 \le 1/2$.
Let $\varepsilon < \varepsilon_0$ be positive.
By Corollary \ref{alopquotients.cor},
\[
\frac{w(E)}{w(F)} = \frac{w(E)}{w(\Delta)} \frac{w(\Delta)}{w(F)} \le (1 + K\varepsilon)^{2} 2^{2K \varepsilon},
\]
where we estimated the first ratio by \eqref{aloptfromBq.eq} and the second ratio by \eqref{aloptfromAp.eq}.
The claim then follows by Taylor's theorem.
\end{proof}

From now on, $\eps_0$ is fixed such that the three Lemmas \ref{korthrm9.lem}, \ref{alopquotients.cor} and \ref{korstrongdoub.lem} all hold.

\begin{lemma}
\label{nablaoverphi.lem}
Let $\varepsilon_0 > 0$ be as above. There exists $C$ only depending on the dimension such that the following holds.
If $w \in A_\infty$ is a weight with $ \no{\log w}_{\BMO}  \leq \varepsilon$ for some $0< \varepsilon < \varepsilon_0$, then for all $r>0$ and $x \in \R^n$,
\[
\frac{|\nabla (\phi_r * w )(x)|}{(\phi_r * w)(x)} \le \frac{C \varepsilon}{r}.
\]
\end{lemma}
\begin{proof}
Since $\nabla(\phi_r * w )(x) = r^{-1} ((\nabla \phi)_r * w )(x)$, we see by translation and scaling invariance of the assumption on $w$ that it suffices to prove the claim for $r=1$ and $x = 0$.
Clearly, for instance by Corollary \ref{alopquotients.cor},
$w$ is a doubling measure with an absolute doubling constant $D$.
Since 
\[
(\phi * w)(0) \ge \pi^{-n/2} e^{-1} w(\Delta(0,1)),
\]
it will be enough to prove
\begin{equation}
	\label{goal_old_thm.eq}
	|( \nabla \phi * w )(0)| \le C \varepsilon \, w(\Delta(0,1)).
\end{equation}
Recall that $\nabla \phi(x) = - (x/|x|) \cdot 2 \pi^{-n/2} |x| e^{-|x|^{2}} $. As this function is odd,
it holds 
\[
|(\nabla \phi * w )(0)| =    \abs{\int_{\R^n} \nabla \phi(y)  \frac{w(y) - w(-y)}{2} \, dy }.
\]
Splitting the domain of integration into dyadic annuli, $C_0 := \Delta(0,2)$, $C_j := \Delta(0,2^{j+1}) \setminus \Delta(0,2^j)$ for $j \geq 1$,
and bounding $|\nabla \phi|$ by its maximum on each set yields
\begin{align}
\label{shellsproof1.eq}
	|(\nabla \phi * w )(0)|
	& \leq \sum_{j=0}^\infty 4 \pi^{-n/2} 2^j e^{-4^j} \int_{\Delta(0,2^{j+1})} \abs{\frac{w(y) - w(-y)}{2}} \, dy.
\end{align}
Fixing $j$ for a moment,
we denote
\begin{align*}
X_{j} &:= \{ y \in \Delta(0,2^{j+1}) : w(y) - w(-y) > 0  \},\\
Y_{j} &:= \{ y \in \Delta(0,2^{j+1}) : w(y) - w(-y) < 0  \},\\
Z_{j}' &:= \{ y \in \Delta(0,2^{j+1}) : w(y) - w(-y) = 0  \}.
\end{align*}
It holds $|X_{j}| = |Y_{j}|$ and $|Z_{j}' \cap H| = |Z_{j}'|/2$ for any half space $H$ with $0 \in \partial H$.
We fix one half space $H$ and denote $Z_{j} := H \cap Z_{j}'$.
Introducing $E_{j} := X_{j} \cup Z_{j}$,
we obtain essentially disjoint sets $E_{j}$ and $-E_{j}$ such that $E_{j} \cup( -E_{j}) = \Delta(0, 2^{j+1})$
and $|E_{j}| =|-E_{j}|$. This allows us to bring Lemma~\ref{korstrongdoub.lem} into play. We get
\begin{align}
\begin{split}
\label{shellsproof2.eq}
\int_{\Delta(0,2^{j+1})} &\abs{\frac{w(y) - w(-y)}{2}} \, dy
= 2 \int_{E_j} \abs{\frac{w(y) - w(-y)}{2}} \, dy \\
&=  w(E_j) - w(-E_j) 
= \left(\frac{w(E_j)}{w(-E_j)} - 1 \right)w(-E_j)
\le C \varepsilon \, w(-E_j),
\end{split}
\end{align}
where the second equality used that $w(y) - w(-y) \geq 0$ in $E_{j}$. Noting that
\[
w(-E_j) \leq w(\Delta(0,2^{j+1})) \leq D^{j+1} w(\Delta(0,1))
\]
by doubling, we conclude the proof of \eqref{goal_old_thm.eq} by combining \eqref{shellsproof1.eq} and \eqref{shellsproof2.eq}.
\end{proof}

In addition to Lemma \ref{nablaoverphi.lem}, we need an estimate to control the ratio of ordinary averages and heat averages.

\begin{lemma}
\label{heatoverrough.lem}
Let $\varepsilon_0$ be as before and $w$ an $A_\infty$ weight with $\no{\log w}_{\BMO}  \leq \varepsilon$ for some $\varepsilon < \varepsilon_0$.
Let $r>0$, $y \in \rn$ and $x \in \Delta(y,r)$.
Then 
\begin{equation}\label{eq0001.eq}
|\log (\phi_r \ast w)(x) - \log w_{\Delta(y,r)}| \le C \varepsilon,
\end{equation}
where $C$ only depends on the dimension.
\end{lemma}
\begin{proof}
As in the proof of Corollary \ref{korstrongdoub.lem},
we may use Corollary \ref{alopquotients.cor}
to conclude that  
\[
|\log w_{\Delta(y,r)} - \log w_{\Delta(x,r)}| 
\le \abs{ \log \frac{w(\Delta(y,r))}{w(\Delta(y,2r))} \frac{w(\Delta(y,2r))}{w(\Delta(x,r))} }
\le C \varepsilon.
\]
Hence, it suffices to prove the claim for $x=y$.
By translation invariance, we may assume that $x=y=0$  and then by scaling invariance we may also assume $r=1$.

We denote 
\begin{equation}
\label{iota_gamma.eq}
\iota (\lambda) :=  (- \log (\lambda \pi^{n/2}))^{1/2},
\quad \gamma_s(t) := \frac{1}{\pi^{n/2}} \exp(-t^{2/s}),
\end{equation}
where $\lambda \in (0,\pi^{-n/2})$ and $s \in (0,\infty)$.
Note that $\iota^{s}$ is the inverse function of $\gamma_s$.
We write by the Cavalieri principle
\begin{equation}\label{eq0002.eq}
\log (\phi \ast w)(0) - \log w_{\Delta(0,1)}
= \log \left( |\Delta(0,1)| \int_{0}^{\pi^{-n/2}} \frac{w(\Delta(0,\iota(\lambda)))}{w(\Delta(0,1))} \, d \lambda \right) .
\end{equation}
If $\iota(\lambda) \ge 1$,
we apply \eqref{aloptfromAp.eq} to estimate 
\[
\frac{w(\Delta(0,\iota(\lambda)))}{w(\Delta(0,1))} \le (1+K \varepsilon) \iota(\lambda)^{(1+K\varepsilon)n}.
\]
If $\iota (\lambda) < 1$, 
we apply \eqref{aloptfromBq.eq} to estimate 
\[
\frac{w(\Delta(0,\iota(\lambda)))}{w(\Delta(0,1))} \le (1+K \varepsilon) \iota(\lambda)^{(1-K\varepsilon)n}.
\]
In order to match the exponents of $\iota(\lambda)$ in the estimates above,
we note that for all $0 < a \le  1$ and $s \geq 1/2$,
\[
|a^{(1-K\varepsilon)n} -  a^{(1+K\varepsilon)n}|
\le \int_{(1-K\varepsilon)n}^{(1+K\varepsilon)n}|(\log a) a^{s}| \, ds 
\le C \varepsilon
\]
with $C$ only depending on the dimension. We have $K \eps_0 \leq 1/2$, see the proof of Corollary~\ref{korstrongdoub.lem}. Hence, we can use this estimate with $a = \iota (\lambda)$ when $\iota(\lambda) < 1$ and conclude 
\begin{align*}
 \int_{0}^{\pi^{-n/2}} \frac{w(\Delta(0,\iota(\lambda)))}{w(\Delta(0,1))} \, d \lambda
    &\le C \varepsilon + (1+K \varepsilon) \int_{0}^{\pi^{-n/2}} \iota(\lambda)^{(1+K\varepsilon)n} \, d \lambda \\
    &= C \varepsilon + \frac{1+K \varepsilon}{|\Delta(0,1)|} \int_{\R^n} \gamma_{1+K\varepsilon}(|y|) \, dy,
\end{align*}  
where $\gamma_{1+K\varepsilon}$ is as defined in \eqref{iota_gamma.eq}
and the last equality follows by the Cavalieri principle. 
By the fundamental theorem of calculus,
\[
\int_{\R^n} \gamma_{1+K\varepsilon}(|y|) \, dy = 1 + \int_{\R^n} \int_1^{1+K\eps} \partial_s \gamma_{s}(|y|) \, ds dy \le 1 + C \varepsilon.
\]
The previous two displayed estimates yield an upper bound by $1+ C \eps$ for the term inside the logarithm on the right of \eqref{eq0002.eq}. 

In order to obtain a lower bound of the same type, we would simply reverse the roles of \eqref{aloptfromAp.eq} and \eqref{aloptfromBq.eq} at the beginning of the proof and then repeat the argument up to the obvious changes. Thus, 
\[
|\log (\phi \ast w)(0) - \log w_{\Delta(0,1)}| \leq \log (1+ C \eps) 
\]
and the claimed inequality \eqref{eq0001.eq} (when $x = y = 0$) follows by applying Taylor's theorem.
\end{proof}

\begin{theorem}
\label{converseweight.thm}
There exists $\varepsilon > 0$ depending on dimension such that if 
$w$ is an $A_\infty$ weight with $\no{\log w}_{\BMO} \le \varepsilon$,
then
\begin{equation}\label{convweighthrmeq1.eq}
\no{\mu_w}_{\mathcal{C}} \le C'_1 \no{\log w}_{\BMO} \le C'_2 \sqrt{ \log [w]_{A_{\infty}}},
\end{equation}
where the constants $C'_1,C'_2 \ge 0$ depend on the dimension\footnote{In contrast with Theorem \ref{mainthrm.thrm}, we can control the doubling constant using the $A_\infty$ constant when $\no{\log w}_{\BMO}$ is small.}. 
\end{theorem}

\begin{proof} 
We let $0 < \eps < \eps_0$ as before.
By the usual scaling, it suffices to check the Carleson condition on the unit tent $T(0,1) = (0,1)\times \Delta(0,1)$. Recalling the heat flow identity \eqref{FKPcomputation.eq},
we can use the fundamental theorem of calculus and the divergence theorem as in the proof of Theorem~\ref{localoscbycarleson.thm} to conclude    
\begin{multline*}
\frac{1}{4} \iint_{T(0,1)} \abs{ \nabla \log u(s,y) }^{2}  \, dyds \\
    \le \abs{\int_{\Delta(0,1)} \log \frac{u(1,y)}{w(y)} \, dy} +  C \int_{0}^{1} \int_{\partial \Delta(0,1)} |\nabla\log u(s,y)| \, dH^{n-1}(y) ds
    =: \I + \II.
\end{multline*}
Let us recall that $u(r,y) = (\phi_{\sqrt{r}} * w)(y)$. Thus, we control $\I$ by
\begin{align*}
\I & \leq \abs{\fint_{\Delta(0,1)} (\log (\phi * w)(y) - \log w_{\Delta(0,1)} )\, dy} + \abs{\fint_{\Delta(0,1)}   (\log w_{\Delta(0,1)} - \log w(y) )\, dy} \\
   &\le C \no{\log w}_{\BMO} + \log [w]_{A_{\infty}},
\end{align*}
where we used Lemma \ref{heatoverrough.lem} for the first and the definition of $[w]_{A_{\infty}}$ for the second integral.
To estimate $\II$, we simply apply Lemma \ref{nablaoverphi.lem}. 
We have shown 
\[(\I+\II)/|\Delta(0,1)| \le C ( \no{\log w}_{\BMO}  + \log [w]_{A_{\infty}}),\] 
which, upon applying Lemma \ref{korthrm9.lem}, yields the first inequality in \eqref{convweighthrmeq1.eq}.

The second inequality in \eqref{convweighthrmeq1.eq} is a direct consequence of the first and \cite[Theorem 4]{Kor2}, which states that $\no{\log w}_{\BMO} \leq C \sqrt{\log[w]_{A_\infty}}$ if $[w]_{A_\infty} \leq 2$.
\end{proof}

\begin{remark}
Again by \cite[Theorem 4]{Kor2} the \emph{a priori} smallness in Theorem~\ref{converseweight.thm} can equivalently be assumed for $\log[w]_{A_\infty}$.
\end{remark}

\section{An application to elliptic equations satisfying the Dahlberg--Kenig--Pipher condition.}
\label{DKPsect}

\noindent Here, we apply our quantitative estimates to the study of elliptic measures for elliptic operators satisfying the Dahlberg--Kenig--Pipher (DKP) condition. We need to give several definitions before stating our application. The reader may also wish to consult \cite[Section 2.4]{AGMT} and \cite{BTZ2}.

\begin{definition}[Elliptic matrices and operators]
Fix $\Lambda \ge 1$. We say a matrix-valued function $A: \ree_+ \to M_{n+1}(\re)$ is $\Lambda$-elliptic if $\|A\|_{L^\infty(\ree_+)} \le \Lambda$ and
\[\langle A(X) \xi, \xi \rangle \ge \Lambda^{-1} |\xi|^2, \quad \forall \xi \in \ree, X \in \ree_+.\]
We say $A$ is elliptic if it is $\Lambda$-elliptic for some $\Lambda \ge 1$. 
The smallest such $\Lambda$ is called the ellipticity constant of $A$.
We say $L$ is a divergence form elliptic operator (on $\ree_+$) if $L = -\div A \nabla$ (viewed in the weak sense) for an elliptic matrix $A$.
We denote by $A^{T}$ the transpose of $A$ and by $L^{T}$ the associated operator.
Given $\Omega \subseteq \ree_+$ open, we say $u \in W^{1,2}_{loc}(\om)$ is a weak solution to $Lu = 0$ in $\om$ if 
\[\iint_\om A \nabla u \cdot \nabla F \, dX = 0, \quad \forall F \in C_c^\infty(\om).\]
\end{definition}

For every elliptic operator there is an associated family of elliptic measures. 
\begin{definition}[Elliptic measure]\label{def:Green}
Let $L = -\div A \nabla$ be a divergence form elliptic operator on $\ree_+$. There exists a family of Borel probability measures on $\rn$, $\{\hm^X_L\}_{X \in \ree_+}$, such that for $f \in C_c^\infty(\rn)$ the function
\[u(X) = \int_{\rn} f(y) \, d\hm_L^X(y)\]
is the unique weak solution to the Dirichlet problem
\[(D)_L \quad \begin{cases}
Lu &= 0 \text{ in } \ree_+, \\
u|_{\{0\} \times \rn} &= f,
\end{cases}\]
satisfying $u \in C(\overline{\ree_+})$ and $u(X) \to 0$ as $|X| \to \infty$ in $\ree_+$. We call the measure $\hm^X_L$ the elliptic measure with pole at $X$.

There is a Green function associated to $L$ in $\ree_+$, 
\[
G_L(X,Y): \ree_+ \times \ree_+ \setminus \{ X = Y \}   \to \re,
\]
which satisfies the following \cite[Lemma 2.6]{AGMT}. For fixed $X \in \ree_+$ the Green function can be extended, as a function in $Y$, to a function that vanishes continuously on the boundary $\rn$. The following `Riesz formula' holds and connects the elliptic measure and the Green function: If $f \in C_c^\infty(\rn)$ and $F \in C_c^\infty(\ree)$ are such that $F|_{\{0\} \times \rn}=f$, then
\begin{equation}\label{Rieszform.eq}
\int_{\rn} f(y) \, d\hm_L^X(y) - F(X) = - \iint_{\ree_+} A^T(Y)\nabla_{Y}G_L(X,Y) \cdot \nabla_Y F(Y) \, dY.
\end{equation}
\end{definition}

There is also a notion of Green function and elliptic measure with pole at infinity.
\begin{lemma}[{\cite[Lemma 3.5]{BTZ2}}]\label{def:Greeninf}
Let $L$ be a divergence form elliptic operator on $\ree_+$ with Green function $G_L(X,Y)$.
Let $Z_{k} = (2^{k},0)$ for $k$ a natural number, zero included. 
Define the sequence of functions $u_k: \overline{\ree_+} \to \re$ by 
\[u_k(Y) := \frac{G_L(Z_k, Y)}{G_L(Z_k, Z_0)},\]
where we have extended $u_k$ to the boundary by zero as $u_k(0,y) = 0$ for all $y \in \rn$. 
There exists a subsequence $u_{k_j}$ that converges uniformly on compact subsets of $\overline{\ree_+}$ 
to a function $U$ with the following properties.
\begin{itemize}
\item $U(0,y) = 0$ for all $y \in \rn$,
\item $U(1,0) = 1$,
\item $U(Y) > 0$ for all $Y \in \ree_+$,
\item $U \in C(\overline{\ree_+})$,
\item $U$ solves $L^T U = 0$ in $\ree_+$.
\end{itemize}
Moreover, there exists a locally finite doubling measure $\hm^\infty_L$ on $\rn$ with
\[\frac{1}{G(Z_{k_j},Z_0)} \hm^{Z_{k_j}}_L  \rightharpoonup   \hm^\infty_L \]
such that the following Riesz formula holds:
\begin{equation}\label{Rieszformulainfty.eq}
\int_{\rn} f(y) \, d\hm^\infty_L(y) = - \iint_{\ree_+} A^T(Y)\nabla_{Y}U(Y) \cdot \nabla_{Y} F(Y) \, dY,
\end{equation}
whenever $f \in C_c^\infty(\rn)$ and $F \in C_c^\infty(\ree)$ are such that $F|_{\{0\} \times \rn}=f$.
\end{lemma}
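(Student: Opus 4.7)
The plan is to produce $U$ by a normal families argument, then extract $\hm^\infty_L$ as a weak limit via the Riesz formula \eqref{Rieszform.eq}. For the first step I would show that $\{u_k\}_k$ is precompact in the topology of uniform convergence on compact subsets of $\overline{\ree_+}$. Fix a compact $K \subset \overline{\ree_+}$; for all $k$ large the pole $(0,2^k)$ lies outside a fixed neighborhood of $K$, so each such $u_k$ is a non-negative weak solution of $L^T u_k = 0$ in a neighborhood of $K$ that vanishes on $K \cap \rn$. Harnack applied along a chain of balls from $(0,1)$ to points of $K$ (kept at definite distance from the pole) gives uniform upper and lower bounds for $u_k$ on $K \cap \ree_+$; interior De~Giorgi--Nash--Moser regularity yields equicontinuity on interior compacta, and the standard boundary H\"older regularity for non-negative solutions vanishing on a flat portion of the boundary extends the equicontinuity up to $\rn$. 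Arzel\`a--Ascoli then produces a subsequence $u_{k_j} \to U$ uniformly on compacta with $U \in C(\overline{\ree_+})$.

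The five listed properties of $U$ follow by inspection: $U(y,0)=0$ and $U(0,1)=1$ from uniform convergence; $U>0$ in $\ree_+$ from the uniform Harnack lower bound; continuity from uniform convergence of continuous functions; and the weak equation $L^{T}U=0$ in $\ree_+$ from passing to the limit in $\iint A^T \nabla u_{k_j}\cdot\nabla\Phi\,dX=0$ against $\Phi\in C_c^\infty(\ree_+)$. This last step needs weak $L^2$-convergence of the gradients on interior compacta, which I would obtain from Caccioppoli's inequality, bounding $\|\nabla u_{k_j}\|_{L^2(K)}$ by $\|u_{k_j}\|_{L^2(K')}$ on a slightly enlarged compact $K'$, together with a diagonal extraction to reduce to a single subsequence.

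For the measure I would apply \eqref{Rieszform.eq} with pole $X=(0,2^{k_j})$ and a test pair $(f,F)$ where $F\in C_c^\infty(\ree)$ and $F(\cdot,0)=f$, then divide through by $G_L((0,2^{k_j}),(0,1))$. Writing $\nu_{k_j}:=G_L((0,2^{k_j}),(0,1))^{-1}\hm_L^{(0,2^{k_j})}$, for $j$ large the term $F((0,2^{k_j}))$ vanishes since $F$ has compact support, and the right-hand side converges to $-\iint A^T\nabla U\cdot\nabla F$. Hence
\[\lim_j \int_\rn f\,d\nu_{k_j}=-\iint_{\ree_+} A^T(y,s)\nabla_{y,s}U(y,s)\cdot\nabla_{y,s}F(y,s)\,dy\,ds,\]
and this is a non-negative linear functional of $f\in C_c^\infty(\rn)$ by positivity of each $\nu_{k_j}$. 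Riesz representation then produces the Radon measure $\hm^\infty_L$ satisfying \eqref{Rieszformulainfty.eq}; uniform local mass bounds $\nu_{k_j}(\Delta(0,R))\le C_R$, obtained by testing against a smooth bump and using the Harnack upper bound above, upgrade this to the claimed weak-$*$ convergence against $C_c(\rn)$. Doubling of $\hm^\infty_L$ is then inherited from the uniform doubling of the $\nu_{k_j}$, which itself follows from the standard comparison $\hm_L^X(\Delta(x,r))\approx r^{n-1}G_L(X,x+r e_{n+1})$ (valid when $X$ lies far from $\Delta(x,r)$) together with Harnack in the second slot. The main obstacle is the careful coordination of the two limit passages: one must simultaneously secure uniform convergence of $u_{k_j}$ on compacta of $\overline{\ree_+}$ (which is where boundary H\"older regularity enters) and weak $L^2$-convergence of $\nabla u_{k_j}$ on interior compacta (which drives the limit in the Riesz formula), both handled by diagonal extraction along countable dense subfamilies of $C_c^\infty(\ree_+)$ and $C_c^\infty(\rn)$.
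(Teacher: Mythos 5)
The paper does not prove this lemma; it is imported verbatim (with citation) from \cite[Lemma~3.5]{BTZ2}, so there is no in-paper proof to compare against. Your sketch is the standard normal-families construction of the Green function with pole at infinity and is essentially the same route one would expect in \cite{BTZ2}; the overall structure (Harnack plus interior De~Giorgi--Nash--Moser plus boundary H\"older for compactness, Caccioppoli for the weak equation, passage to the limit in the Riesz formula for the measure, and CFMS comparison for doubling) is correct.

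Two points deserve more care than you give them. First, when you pass to the limit in the Riesz formula, the integral $\iint_{\ree_+} A^T \nabla u_{k_j}\cdot\nabla F$ is taken over $\supp F \cap \overline{\ree_+}$, a compact set that meets the boundary $\rn$; weak $L^2$ convergence of $\nabla u_{k_j}$ on \emph{interior} compacta, which is what your Caccioppoli step supplies, does not reach this region. You need a boundary Caccioppoli estimate, valid because $u_{k_j}$ vanishes continuously on the flat boundary away from the pole, giving uniform $L^2$ bounds on $\nabla u_{k_j}$ on sets of the form $\Delta(0,R)\times(0,R)$ for $j$ large. Second, the phrase ``Harnack along a chain of balls from $(0,1)$ to points of $K$'' breaks down as points of $K$ approach $\rn$ (the Harnack constant blows up); the correct statement is that interior Harnack yields uniform bounds at a fixed distance above the boundary, and the Carleson/boundary H\"older estimate then propagates the \emph{upper} bound down to $\rn$, while positivity in the open half-space remains a pointwise statement. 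Neither issue is fatal, and both are standard, but as written these steps would not quite close.
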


We call $\hm^\infty_L$ the {elliptic measure with pole at infinity} and $U$ the {Green function with pole at infinity}.

We define the following coefficients which measure the oscillation of an elliptic matrix on various regions. 

\begin{definition}[Oscillation coefficients]\label{OSCcoef.def}
Let $A$ be a $\Lambda$-elliptic matrix-valued function on $\ree_+$. 
For $x \in \rn$ and $r > 0$ we define
\[\alpha_2(r,x) := \inf_{A_0 \in \mathfrak{A}(\Lambda)}\left(\fiint_{(r/2,r] \times \Delta(x,r)} |A(s,y) - A_0|^2 \, dy ds \right)^{1/2},\]
where $\mathfrak{A}(\Lambda)$ denotes the collection of all {\it constant} $\Lambda$-elliptic matrices.
\end{definition}

Now we recall the weak DKP condition and the associated norm.

\begin{definition}[Weak DKP condition]
We say a $\Lambda$-elliptic matrix $A$ satisfies the {weak DKP condition} if $\nu$ defined by 
\[d\nu(r,x) := \alpha_2(r,x)^2 \, \frac{dx \, dr}{r}\]
is a Carleson measure. 
If $A$ satisfies the weak DKP condition, we call $\|\nu\|_{\mathcal{C}}$ the {weak DKP norm} of $A$.
\end{definition}
\begin{remark}
There are other similar oscillation coefficients that are either controlled by, or comparable to, $\alpha_2(r,x)$. For instance, the weak DKP condition here is implied by the original DKP condition. Since this is not the focus of the present work, we do not go into the details. See \cite{DPP, DLM, BTZ2}. 
\end{remark}


\begin{lemma}[{\cite[Theorem 4.1]{BTZ2}}]\label{BTZmain.thrm}
Let $A$ be a $\Lambda$-elliptic matrix-valued function on $\ree$ and 
let $\hm$ be the elliptic measure for $L = -\div A\nabla$ with pole at infinity.
Let $\varphi \ge 0$ be a non-zero smooth function with compact support in $\Delta(0,2)$ 
and define the measure 
\[d\mu(r,x) := |\nabla_x  \log (\varphi_r \ast \hm)(x)|^2 r \, dx \,dr = \frac{|((\nabla \varphi)_r \ast \hm)(x)|^2}{|(\varphi_r \ast \hm)(x)|^2} \frac{dx \, dr}{r}. \]
Suppose that $A$ satisfies the weak DKP condition and define 
\[d\nu(r,x) := \alpha_2(r,x)^2 \, \frac{dx \, dr}{r}\]
with $\alpha_2(r,x)$ the oscillation coefficients of $A$.
Then $\hm \ll \mathcal{L}^n$ and there exists a constant $C_3$ depending on $n$, $\Lambda$ and $\varphi$ such that
\[\|\mu\|_{\mathcal{C}} \le C_3 \|\nu\|_{\mathcal{C}}.\]
\end{lemma}
In the theorem $\mathcal{L}^n$ is $n$-dimensional Lebesgue measure 
and the statement $\hm \ll \mathcal{L}^n$ follows from the fact 
that $\|\mu\|_{\mathcal{C}} < \infty$, see Remark \ref{mtmeasures.rmk}. 

Although the heat kernel used in Section \ref{weights.sec}
does not satisfy the condition required from $\varphi$ in Theorem \ref{BTZmain.thrm},
the Carleson norms generated by different kernels are comparable.
We quote the following formulation of \cite[Corollary 12]{Koreycarl}.
\begin{lemma}\label{approxvsgaus.thrm}
Let $w$ be a doubling weight on $\mathbb{R}^n$,
let $\phi$ the heat kernel as in Section~\ref{weights.sec}
and let $\varphi$ be a non-trivial smooth function with support in $\Delta(0,2)$.
Define the measures $\mu_w$ and $\tilde{\mu}_w$ by
\[d\mu_w(r,x) := |\nabla_x \log(\phi_r \ast w)(x)|^2 r \, dx \, dr\]
and
\begin{equation}\label{mutilddef.eq}
d\tilde{\mu}_w(r,x) := |\nabla_x \log(\varphi_r \ast w)(x)|^2 r \, dx \, dr.
\end{equation}
Then $\|\mu_w\|_{\mathcal{C}} < \infty$ if and only if $\|\tilde{\mu}_w\|_{\mathcal{C}} < \infty$. Moreover, there exists a finite constant $C > 1$ depending on $n$, the doubling constant of $w$
and the Schwartz seminorms of $\varphi$ such that
\begin{equation}\label{mutmucomp.eq}
 \frac{1}{C} \|\mu_w\|_{\mathcal{C}} \le \|\tilde{\mu}_w\|_{\mathcal{C}}  \le C \|\mu_w\|_{\mathcal{C}}.
\end{equation} 
\end{lemma}

From Lemmata \ref{approxvsgaus.thrm} and \ref{BTZmain.thrm} and Theorem \ref{dkpsmallthrm.thrm}
we may immediately deduce the following, 
which is our main application of the present work.

\begin{theorem}\label{dkpsmallthrm.thrm}
Fix $\Lambda > 1$. 
There exist $\delta > 0$ and $C_3,C_4 > 0$ depending only on $n$, $\Lambda$
such that the following holds.
If $A$ is a $\Lambda$-elliptic matrix-valued function on $\ree_+$, 
$\hm$ is the elliptic measure for $L = -\div A\nabla$ with pole at infinity, 
and $A$ satisfies the weak DKP condition with 
\[\|\nu\|_{\mathcal{C}} \le \delta,\]
then $\hm \in A_\infty$ and 
\[
\log[\hm]_{A_\infty} \le C_3 \sqrt{\|\nu\|_{\mathcal{C}}}, \quad  \left\|\log \frac{d\hm}{dx} \right\|_{\BMO}   \le C_4 \sqrt[4]{\|\nu\|_{\mathcal{C}}},
\]
where $\nu$ is defined by
\[d\nu(r,x) := \alpha_2(r,x)^2 \, \frac{dx \, dr}{r},\]
$\alpha_2(r,x)$ being the oscillation coefficients of $A$. 
\end{theorem}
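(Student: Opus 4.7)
The plan is to simply chain together Theorems \ref{approxvsgaus.thrm} and \ref{BTZmain.thrm}, together with the fact recorded in Lemma \ref{def:Greeninf} that the elliptic measure with pole at infinity $\omega$ is a locally finite doubling measure whose doubling constant depends only on $n$ and $\Lambda$. The doubling property is what makes Theorem \ref{approxvsgaus.thrm} applicable to $\omega$, and without it nothing else in this chain would work.

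First I would apply Theorem \ref{BTZmain.thrm} with $w = \omega$. This produces a constant $C_3 = C_3(n,\Lambda)$ such that the measure
\[d\mu(x,r) := |\nabla_x \log(\omega \ast \varphi_r)(x)|^2\, r\, dx\, dr\]
satisfies $\|\mu\|_{\mathcal{C}} \le C_3 \|\nu\|_{\mathcal{C}} \le C_3 \delta_1$. The key observation is that this $\mu$ is precisely the measure $\tilde{\mu}_\omega$ defined in \eqref{mutilddef.eq}, since both use the fixed bump $\varphi$ rather than the Gaussian $\phi$.

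Next I would invoke Theorem \ref{approxvsgaus.thrm} for the doubling weight $\omega$. That theorem supplies, for the given tolerance $\epsilon_1$ and for the doubling constant of $\omega$ (which is controlled by $n$ and $\Lambda$), a threshold $\tilde{\delta}_0 = \tilde{\delta}_0(n, \Lambda, \epsilon_1)$ such that $\|\tilde{\mu}_\omega\|_{\mathcal{C}} \le \tilde{\delta}_0$ implies $[\omega]_{A_\infty} \le 1 + \epsilon_1$. Combining with the previous step, it suffices to choose
\[\delta_1 := \frac{\tilde{\delta}_0(n,\Lambda,\epsilon_1)}{C_3(n,\Lambda)},\]
so that $\|\nu\|_{\mathcal{C}} \le \delta_1$ forces $\|\tilde{\mu}_\omega\|_{\mathcal{C}} \le \tilde{\delta}_0$ and hence $[\omega]_{A_\infty} \le 1 + \epsilon_1$.

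There is no real obstacle here; the only thing to check carefully is that the doubling constant of $\omega$ is controlled purely in terms of $n$ and $\Lambda$ (so that $\tilde{\delta}_0$ depends only on the parameters listed in the theorem statement), and that the measure produced by Theorem \ref{BTZmain.thrm} is literally the $\tilde{\mu}_\omega$ of Theorem \ref{approxvsgaus.thrm} rather than the Gaussian version $\mu_\omega$. Both points are essentially bookkeeping, which is why the excerpt correctly characterizes the theorem as an immediate corollary of the two results on which it rests.
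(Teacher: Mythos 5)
Your proof is correct and follows the same route the paper intends: the paper presents Theorem \ref{dkpsmallthrm.thrm} as an immediate consequence of Theorems \ref{approxvsgaus.thrm} and \ref{BTZmain.thrm}, and your chaining of those two results (with $\delta_1 = \tilde{\delta}_0/C_3$) is exactly that deduction, spelled out. Your flagged caveat is the right one to worry about — the dependence of the doubling constant of $\omega$ on only $n$ and $\Lambda$ is not stated verbatim in Lemma \ref{def:Greeninf} here, but it is a standard consequence of elliptic estimates and is established in \cite{BTZ2}, so the parameter dependence in $\tilde{\delta}_0$ closes up as claimed.
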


After the first version of this article had been posted, 
it was shown in \cite{DLM3} that one can push small constant $A_\infty$ results to the context of Lipschitz domains with small constant (as in \cite{DPP}) by using the `Dahlberg-Kenig-Stein pullback' \cite{Dahlpullback}
and further to even more exotic/rough geometric settings, as was done in the pioneering work of \cite{KT-Duke} for constant coefficient operators.  
The extension of these estimates to rough settings uses however a novel approach because the coefficients in \cite{DLM3} do not play nicely with subdomains.
\section{Further Remarks}\label{closermks.sect}
\subsection{Sharpness}
One may ask about the correct power dependence in Theorem~\ref{mainthrm.thrm}. 
For instance, whether it is true that 
\begin{equation}\label{mteqone.eq}
\log [w]_{A_{\infty}} \lesssim \no{\mu_w}_{\mathcal{C}}
\end{equation}
with constants only depending on dimension and doubling. In the dyadic case, one can track the constants in \cite{bucksum} to see that this is true. 
In fact, in the dyadic setting much more is known about these kinds of inequalities due to the Bellman function approach, see \cite{MR4411371} for overview and \cite{MR3293431} for a more specific example. The validity of \eqref{mteqone.eq} is an interesting question, and armed with \eqref{mteqone.eq}, one would obtain 
\begin{equation}\label{mteqtwo.eq}
\|\log w\|_{\BMO}   \lesssim \sqrt{\log [w]_{A_{\infty}}}  \lesssim \sqrt{\|\mu_w\|_{\mathcal{C}}},
\end{equation}
that is, a square root bound in Theorem \ref{mainthrm.thrm} of the form  $\no{\log w}_{BMO}  \lesssim \sqrt{\|\mu_w\|_{\mathcal{C}}}$. 
Indeed, it is shown in \cite{Kor2} that the first inequality in \eqref{mteqtwo.eq} is asymptotically sharp, see the remark on p.504 in \cite{Kor2}. 

\subsection{H\"older coefficients}
One may use the results here to investigate a variant of Theorem \ref{dkpsmallthrm.thrm} 
where the Carleson norm of $\nu$ vanishes at a particular rate rather than just qualitatively with respect to the scale.
A prime example of this is when the coefficient matrix $A$ is $\theta$-H\"older continuous with $0<\theta<1$.
Using either the hypothetical square root dependency instead of our actual weighted estimates,
one would be able to recover the correct $\theta$-H\"older estimates for the logarithm of the elliptic measure (by perhaps modifying the arguments in \cite{DLM} using this stronger $L^\infty$ control).
Alternatively,
the different $L^{1}$-type Carleson condition of Remark \ref{rmk:bad-carleson} can be used to the same effect.
Such results are not new as they follow from \cite{MR749677} or the more general results in \cite{MR3747493}\footnote{We thank Seick Kim for a helpful discussion regarding these results.},
and the techniques here combined with \cite{DLM} do not seem to provide considerable generalization.

\bibliography{DKPBESrefs}
\bibliographystyle{alpha}

\end{document}